\documentclass[12pt]{amsart}
\thispagestyle{empty}
\usepackage{latexsym,amssymb,amsmath,ulem,color}
\newcommand{\R}{{\mathbb R}}

\newcommand{\Z}{{\mathbb Z}}
\newcommand{\N}{{\mathbb N}}

\newcommand{\C}{{\mathbb C}}

\newcommand{\La}{{\Lambda}}

\newcommand{\LL}{{\mathcal {L}}}

\newtheorem{theorem}{Theorem}[section]
\newtheorem{lemma}[theorem]{Lemma}
\newtheorem{proposition}[theorem]{Proposition}
\newtheorem{conjecture}[theorem]{Conjecture}
\newtheorem{definition}[theorem]{Definition}
\begin{document}
\title{``Spectral implies Tiling" for Three Intervals Revisited}

\author{Debashish Bose}
\address{Debashish Bose: The Institute of Mathematical Sciences, India}
\email{dbosenow@gmail.com}

\author{Shobha Madan}
\address{Shobha Madan: Indian Institute of Technology Kanpur, India}
\email{madan@iitk.ac.in}

\subjclass[2000]{Primary: 42A99}

\begin{abstract}
\medskip

In \cite{BCKM} it was shown that ``Tiling implies Spectral'' holds
for a union of three intervals and the reverse implication was
studied under certain restrictive hypotheses on the associated
spectrum. In this paper, we reinvestigate the ``Spectral implies
Tiling'' part of Fuglede's conjecture for the three interval case.
We first show that the ``Spectral implies Tiling'' for two intervals
follows from the simple fact that two distinct circles have at most
two points of intersections. We then attempt this for the case of
three intervals and except for one situation are able to prove
``Spectral implies Tiling''. Finally, for the exceptional case, we
show a connection to a problem of generalized Vandermonde varieties.
\end{abstract}
\maketitle

\section{\bf{Introduction}}

We begin with the standard definitions and the statement of Fuglede's conjecture.

Let $\Omega$ and $T$ be  Lebesgue measurable subsets of $\R^d$ with finite positive measure. For $\lambda \in \R^d$, let $$e_{\lambda}(x):=|\Omega|^{-1/2}  e^{2 \pi i \lambda .x}{\chi}_{\Omega}(x),\,\,\, x\in \R^d .$$

\begin{definition}
$\Omega$ is said to be a {\bf $spectral$ $set$} if there exists a subset  $\Lambda \subset \R^d$ such that the set of exponential functions $E_{\Lambda}:=\{e_\lambda:\lambda \in \Lambda\}$ is an
orthonormal basis for the Hilbert space $L^2(\Omega)$. The set $\Lambda$ is said to be a {\bf $spectrum$} for $\Omega$ and the pair $(\Omega,\Lambda)$ is called a {\bf $spectral$ $pair$}.
\end{definition}

\begin{definition}
$T$ is said to be a {\bf $prototile$} if $T$ tiles $\R^d$ by translations; i.e., if there exists a subset  $\mathcal T \subset \R^d$ such that $\{T+t: t \in \mathcal T\}$ forms a partition a.e. of $\R^d$, where $T+t=\{x+t : x \in T\}$. The set $\mathcal T$ is said to be a {\bf $tiling$ $set$}
for $T$ and the pair $(T,\mathcal T)$ is called a $ tiling$ $pair$.
\end{definition}

The study of relationships between spectral and tiling properties of
sets began with the work of B. Fuglede \cite{Fug}, who proved the
following result:
\smallskip

\begin{theorem}(Fuglede~\cite{Fug})\label{fuglede} Let $\LL$ be a full rank lattice in $\R^d$ and let $\LL^*$ be the dual lattice. Then $(\Omega,\LL)$ is a tiling pair  if and only if $(\Omega, \LL^*)$ is a spectral pair.
\end{theorem}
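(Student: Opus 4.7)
The plan is to show that both the tiling and spectral conditions are equivalent to the same Fourier-analytic condition on $\chi_\Om$. Expanding the inner product on $L^2(\Om)$ gives $\langle e_\lambda, e_\mu \rangle = |\Om|^{-1}\widehat{\chi_\Om}(\mu-\lambda)$, so orthogonality of $E_{\LL^*}$ is equivalent to
\[
\widehat{\chi_\Om}(\lambda)=0 \quad \text{for every } \lambda \in \LL^*\setminus\{0\}.
\]
On the tiling side, $(\Om,\LL)$ is a tiling pair iff the $\LL$-periodization $F(x)=\sum_{\ell\in\LL}\chi_\Om(x+\ell)$ equals $1$ almost everywhere on $\R^d/\LL$; computing the Fourier coefficients of $F\in L^1(\R^d/\LL)$ at the frequencies $\lambda\in\LL^*$ shows this is in turn equivalent to the same vanishing condition together with the measure identity $|\Om|=\mathrm{vol}(\R^d/\LL)$.

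For the implication tiling $\Rightarrow$ spectral, the tiling hypothesis yields a measure-preserving bijection (a.e.) between $\Om$ and a fundamental domain $D$ of $\LL$, via $x \mapsto x - \ell_x$ where $\ell_x \in \LL$ is the unique translate placing $x$ in $D$. This induces a unitary $L^2(\Om)\cong L^2(D)$, and because each character $e^{2\pi i\lambda\cdot x}$ with $\lambda\in\LL^*$ is $\LL$-periodic, this isometry carries the family $E_{\LL^*}$ on $\Om$ to the analogous family on $D$, which is the classical Fourier basis of $L^2(D)\cong L^2(\R^d/\LL)$. Hence $E_{\LL^*}$ is an orthonormal basis of $L^2(\Om)$.

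For the reverse implication, orthogonality already supplies the vanishing of $\widehat{\chi_\Om}$ on $\LL^*\setminus\{0\}$; what remains is the normalization $|\Om|=\mathrm{vol}(\R^d/\LL)$. To extract this from completeness, apply Parseval to the functions $f_\xi(x)=e^{2\pi i\xi\cdot x}\chi_\Om(x)\in L^2(\Om)$ to obtain
\[
|\Om|^2 = \sum_{\lambda\in\LL^*}\bigl|\widehat{\chi_\Om}(\lambda-\xi)\bigr|^2 \quad\text{for every }\xi\in\R^d,
\]
then integrate over a fundamental domain of $\LL^*$ and invoke Plancherel on $\R^d$ to get $|\Om|^2\,\mathrm{vol}(\R^d/\LL^*)=|\Om|$. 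Combined with the duality $\mathrm{vol}(\R^d/\LL)\cdot\mathrm{vol}(\R^d/\LL^*)=1$, this yields the required measure equality, after which Fourier-coefficient matching on $\R^d/\LL$ gives $F\equiv 1$ a.e., i.e.\ tiling. The only delicate point is precisely this extraction of the measure identity from completeness; the rest of the argument is a direct correspondence of Fourier conditions via Poisson summation.
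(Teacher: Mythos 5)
Your argument is correct. Note, though, that the paper does not prove this theorem at all: it is quoted verbatim from Fuglede's original article \cite{Fug} and used as a known result, so there is no in-paper proof to compare against. What you give is the standard periodization proof, and all the steps check out: orthogonality of $E_{\LL^*}$ is indeed equivalent to the vanishing of $\widehat{\chi_\Om}$ on $\LL^*\setminus\{0\}$, which in turn controls all nonzero Fourier coefficients of the $\LL$-periodization $F$; the tiling-to-spectral direction via the measure-preserving identification of $\Om$ with a fundamental domain is clean because the characters $e^{2\pi i\lambda\cdot x}$, $\lambda\in\LL^*$, are $\LL$-periodic. The one genuinely delicate point, which you correctly isolate and resolve, is extracting the normalization $|\Om|=\mathrm{vol}(\R^d/\LL)$ from completeness: applying Parseval to the modulated functions $f_\xi$, integrating $\xi$ over a fundamental domain of $\LL^*$ (Tonelli justifies the interchange, since the terms are nonnegative), and invoking Plancherel together with $\mathrm{vol}(\R^d/\LL)\cdot\mathrm{vol}(\R^d/\LL^*)=1$ is exactly the right way to do it. With the measure identity in hand, uniqueness of Fourier coefficients for $F\in L^1(\R^d/\LL)$ gives $F\equiv 1$ a.e., completing the equivalence.
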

\smallskip

In the same paper, Fuglede made the following conjecture, which is also known as the Spectral Set conjecture.
\smallskip

\begin{conjecture}(Fuglede's conjecture)
{\it A set $\Omega \subset \R^d$ is a spectral set if and only if $\Omega$ tiles $\R^d$ by translation.}
\end{conjecture}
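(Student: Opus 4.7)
The plan is to treat the conjecture as a Fourier-side biconditional, with Theorem~\ref{fuglede} serving as the lattice prototype. Write $\widehat{\chi_\Omega}$ for the Fourier transform of the indicator of a bounded $\Omega\subset\R^d$, and let $Z=\{\xi\in\R^d:\widehat{\chi_\Omega}(\xi)=0\}$. Orthogonality of $E_\Lambda$ is equivalent to $(\Lambda-\Lambda)\setminus\{0\}\subset Z$, while completeness of $E_\Lambda$ is equivalent to the Parseval-type identity $\sum_{\lambda\in\Lambda}|\widehat{\chi_\Omega}(\xi-\lambda)|^2=|\Omega|^2$. On the tiling side, $(\Omega,\mathcal{T})$ is a tiling pair iff $\widehat{\chi_\Omega}\cdot\widehat{\delta_{\mathcal{T}}}=|\Omega|\,\delta_0$ as tempered distributions. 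The goal is then to show that these two discrete sets $\Lambda$ and $\mathcal{T}$ are interchangeable up to Fourier duality, with Theorem~\ref{fuglede} as the model case in which both sets are lattices dual to each other.

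For the \emph{tiling implies spectral} direction I would first try to reduce to the case where $\mathcal{T}$ is periodic, say $\mathcal{T}=\mathcal{T}_0+L$ for a finite set $\mathcal{T}_0$ and a full-rank sublattice $L\subset\R^d$. The tiling identity then forces $\widehat{\chi_\Omega}$ to vanish on $L^{\ast}\setminus\{0\}$ outside the zero set of a character polynomial of $\mathcal{T}_0$, and combining this with Theorem~\ref{fuglede} applied to a fundamental domain of $L$ should produce an explicit spectrum as a union of cosets of $L^{\ast}$. For the converse \emph{spectral implies tiling} direction, the strategy is dual: take the distributional Fourier transform of the Parseval identity to obtain that $\widehat{\delta_{\Lambda}}$ is supported in $Z\cup\{0\}$; if $\Lambda$ is periodic this again feeds into Theorem~\ref{fuglede}, and in general one would try to extract a tiling set $\mathcal{T}$ directly from the Fourier-support data carried by $\widehat{\delta_{\Lambda}}$.

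The main obstacle, in both directions, is the aperiodic case. The support condition $\mathrm{supp}(\widehat{\delta_{\Lambda}})\subset Z\cup\{0\}$ does not by itself guarantee that $\widehat{\delta_{\Lambda}}$ is a discrete measure, and without such discreteness there is no candidate tiling set to test against the tiling identity; symmetrically, a generic tiling set need not produce a discrete spectrum via Fourier duality. The entire weight of the conjecture thus concentrates on promoting Fourier-support information to honest discreteness of the dual measure, and concrete geometric information about $\Omega$---such as its being a union of a small number of intervals, as exploited in the rest of this paper---appears to be the most promising lever against this obstacle.
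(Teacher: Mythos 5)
The statement you are asked about is a \emph{conjecture}, and the paper contains no proof of it: it is the open problem that motivates the paper, and the authors' actual results are partial (the two-interval case in Section 3, and all but one subcase of the three-interval case in Sections 4--5, all in dimension one). Your text is likewise not a proof but a program, and your own final paragraph concedes the decisive gap: you have no mechanism for promoting the support condition $\mathrm{supp}(\widehat{\delta_\Lambda})\subset Z\cup\{0\}$ to discreteness (or any usable structure) of the dual measure, and without that neither direction closes. That missing step is not a technicality --- it is essentially the entire content of the conjecture. Two further points make the plan unworkable as stated. First, the reduction of an arbitrary tiling set $\mathcal{T}\subset\R^d$ to a periodic one $\mathcal{T}_0+L$ is itself unproven for $d\geq 2$ (the periodic tiling question), so even the ``easy'' direction does not reduce to Theorem~\ref{fuglede}. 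Second, and fatally for the general statement, the conjecture as written for arbitrary $\Omega\subset\R^d$ is known to be \emph{false} in higher dimensions (Tao's counterexample to ``spectral implies tiling'' in $\R^5$, later pushed down, with counterexamples in both directions for $d\geq 3$), so no correct argument can establish the biconditional in the generality you set up; any viable approach must, as the paper does, restrict to low dimensions or to special classes of sets.

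Your Fourier-analytic framing (orthogonality as $(\Lambda-\Lambda)\setminus\{0\}\subset Z$, completeness as the Parseval identity $\sum_\lambda|\widehat{\chi_\Omega}(\xi-\lambda)|^2=|\Omega|^2$, tiling as a convolution identity) is the standard and correct starting point, and it is broadly consistent with the machinery the paper actually uses for its partial results. But the paper's working tool is different and more concrete: it embeds the spectrum via $\varphi_\Omega$ into $\C^n\times\C^n$ with the indefinite form $\odot$, exploits the dimension bound $\dim V_\Omega(\Lambda)\leq n$ together with the periodicity of the spectrum, and reduces the orthogonality relations to intersection problems for circles (and, in the residual case, generalized Vandermonde varieties on $\mathbb{T}^3$). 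If you want to contribute to this problem, you should aim at one of those concrete finite-dimensional reductions rather than at the conjecture in full generality.
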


This led to the study of spectral and tiling properties of sets. We refer the reader to \cite{BM} for  a survey and the present status of this problem.
\medskip

In one dimension, for the simplest case when $\Omega$ is a finite
union of intervals, the problem is open in both directions and only
the $2$-interval case has been completely resolved by Laba in
\cite{L1}, where she proved that the conjecture holds true.
\medskip

In \cite{BCKM} the case of three intervals was explored. It was
shown there that the ``Tiling implies Spectral'' part of Fuglede's
conjecture is true in this case,  and the reverse implication was
proved under some restrictive hypothesis on the associated spectrum.
\medskip

Recently in \cite{BM}, the authors have shown that any spectrum
associated with a spectral set which is a finite union of intervals
is periodic (see \cite{Kol} for a simplification of the proof). One
of the key ingredients in both proofs is an embedding of the
spectrum in a suitable vector space, equipped with an indefinite
conjugate linear form. In this note we develop these ideas to give
another proof of the ``Spectral implies Tiling'' part of Fuglede's
conjecture for two intervals and then we attempt this for the case
of three intervals. With the exception of one case, we are able to
conclude that the ``Spectral implies Tiling'' indeed holds. In the
last section, we show a connection of the exceptional case to a
question on the intersections of generalized Vandermonde varieties
restricted to the $3$-torus.

\section{Embedding $\Lambda$ in a vector space}

In this section we recall the embedding of the spectrum in a vector
space \cite{BM}.
\medskip

Consider the $2n$-dimensional vector space $\C^n\times\C^n$. We write its elements as $\underbar{v}=\left(v_1,v_2\right)$ with $v_1,v_2\in \C^n$. We define a conjugate linear form $\odot$ on $\C^n\times\C^n$ as follows: for $\underbar{v},\underbar{w}\in\C^n\times\C^n$, let
$$\underbar{v}\odot\underbar{w}:= \langle v_1,w_1\rangle -\langle v_2, w_2\rangle, $$ where $\langle\cdot,\cdot\rangle$ denotes the usual inner product on $\C^n$. Note that this conjugate linear form is degenerate, i.e., there exists $\underbar{v} \in \C^n\times\C^n$, $\underbar{v} \neq 0$ such that $\underbar{v}\odot\underbar{v}=0$. We call such a vector a {\it null-vector}. For example, every element of $\mathbb T^n \times \mathbb T^n$ is a null-vector.
\medskip

A subset $S\subseteq\C^n\times\C^n$ is called a set of {\it mutually null-vectors} if
 $\forall \,\,\underbar{v},\underbar{w}\in S$\textcolor{blue}{,} we have $\underbar{v}\odot\underbar{w}=0$.
\medskip

It is clear from the definition that elements of a set of mutually
null-vectors are themselves necessarily null-vectors. Any linear
subspace $V$ spanned by a set of mutually null vectors is itself a
set of mutually null-vectors and $dim(V)\leq n$.
\medskip

Now, suppose $\Omega=\cup_{j=1}^n \left[a_j, a_j+r_j\right)$ is a union of
 $n$ disjoint intervals with $a_1=0$ and $ |\Omega| = \sum_1^n r_j=1$. We
define a map $\varphi_{\Omega}$ from $\R$ to $\mathbb T^n \times
\mathbb T^n \subseteq \C^n \times \C^n$ by $$ x\rightarrow
\varphi_{\Omega}(x)=\left(\varphi_1(x); \varphi_2(x)\right),$$ where
$$ \varphi_1(x)=\left(e^{2\pi i (a_1+r_1) x}, e^{2\pi i (a_2+r_2)
x}, \dots, e^{2\pi i (a_n+r_n) x}\right)$$ $$ \varphi_2(x)=\left(1,
e^{2\pi i a_2 x}, \dots, e^{2\pi i a_n x}\right).$$

For a set $\La \subset \R$, the mutual orthogonality of the set of
exponentials $E_\La = \{e_\lambda: \lambda \in \La\}$ is equivalent
to saying that the set $\varphi_\Omega(\La) =
\{\varphi_\Omega(\lambda) ; \lambda \in \Lambda \}$ is a set of
mutually null vectors, and so  the vector space $V_\Omega(\La)$
spanned by $\varphi_\Omega(\La)$ has dimension at most $n$.
Therefore  if $(\Omega , \La)$ is a spectral pair, we can say that
$\La$ has a ``local finiteness property'', in the sense that there
exists a finite subset $\mathcal{B}=\left\{y_1,\dots,y_m\right\}
\subseteq\Lambda$, $m\leq n$ which determines  $\Lambda$  uniquely.
More precisely we have,

\begin{lemma}\label{local finite}
Let $(\Omega,\Lambda)$ be a spectral pair and let $\mathcal{B}\subseteq\Lambda$ be such that
$\varphi_{\Omega}(\mathcal B):=\left\{\varphi_{\Omega}(y):y\in \mathcal{B} \right\}$ forms a basis of $V_{\Omega}(\Lambda)$. Then $x\in\Lambda\,$ if and only if $\,\varphi_{\Omega}(x)\odot\varphi_{\Omega}(y)=0,\,\, \forall \, y\in\mathcal{B}$.
\end{lemma}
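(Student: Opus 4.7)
The plan is to derive the lemma from two facts already recorded in the excerpt: first, the equivalence $\varphi_\Omega(x)\odot\varphi_\Omega(y)=0 \Leftrightarrow \langle e_x,e_y\rangle_{L^2(\Omega)}=0$ whenever $x\neq y$; and second, the observation that every element of $\mathbb T^n\times\mathbb T^n$ is a null-vector of $\odot$. With these in hand, both directions of the lemma reduce to short bookkeeping arguments around the sesquilinearity of $\odot$ and the orthonormal-basis property of $E_\Lambda$.

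For the forward (``only if'') direction, I would fix $x\in\Lambda$ and an arbitrary $y\in\mathcal{B}\subseteq\Lambda$. If $x\neq y$, the spectral-pair hypothesis forces $e_x\perp e_y$ in $L^2(\Omega)$, and the equivalence above immediately yields $\varphi_\Omega(x)\odot\varphi_\Omega(y)=0$. If $x=y$, the expression collapses to $\varphi_\Omega(x)\odot\varphi_\Omega(x)$, which vanishes because $\varphi_\Omega(x)\in\mathbb T^n\times\mathbb T^n$ is a null-vector.

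For the converse, I would use the sesquilinearity of $\odot$ together with the basis hypothesis. Assuming $\varphi_\Omega(x)\odot\varphi_\Omega(y)=0$ for every $y\in\mathcal{B}$ and noting that $\varphi_\Omega(\mathcal{B})$ spans $V_\Omega(\Lambda)$, we deduce $\varphi_\Omega(x)\odot v=0$ for every $v\in V_\Omega(\Lambda)$; in particular $\varphi_\Omega(x)\odot\varphi_\Omega(\lambda)=0$ for each $\lambda\in\Lambda$. Arguing by contradiction, suppose $x\notin\Lambda$. Then $x\neq\lambda$ for every $\lambda\in\Lambda$, so the equivalence forces $\langle e_x,e_\lambda\rangle_{L^2(\Omega)}=0$ for all $\lambda\in\Lambda$. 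Since $E_\Lambda$ is an orthonormal basis of $L^2(\Omega)$, this would give $e_x\equiv 0$ in $L^2(\Omega)$, contradicting $\|e_x\|_{L^2(\Omega)}=1$. Hence $x\in\Lambda$.

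I do not anticipate a genuine obstacle here; the statement is essentially an unpacking of the embedding $\varphi_\Omega$. The only mild subtlety is the separate treatment of the diagonal $x=y$ in the forward direction, which is handled cleanly by the null-vector property of $\mathbb T^n\times\mathbb T^n$. The conceptual point worth stressing is that the basis-of-$V_\Omega(\Lambda)$ hypothesis is precisely what promotes finitely many orthogonality tests against the finite set $\mathcal{B}$ to full orthogonality against the (possibly infinite) set $\varphi_\Omega(\Lambda)$, after which the completeness of $E_\Lambda$ in $L^2(\Omega)$ closes the argument.
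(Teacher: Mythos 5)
Your proof is correct, and it uses exactly the ingredients the paper sets up just before the lemma (the identity $\langle e_x,e_y\rangle = \frac{1}{2\pi i(x-y)}\,\varphi_\Omega(x)\odot\varphi_\Omega(y)$ for $x\neq y$, the null-vector property of $\mathbb T^n\times\mathbb T^n$, conjugate linearity of $\odot$, and completeness of $E_\Lambda$); the paper in fact states the lemma without a written proof, and your argument is the intended one. No gaps.
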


Next, we give a criterion for the periodicity of the spectrum.

\begin{lemma}\label{repeated}
Let $(\Omega,\Lambda)$ be a spectral pair. If $\exists\ \lambda_1, \lambda_2 \in \Lambda$ such that $\varphi_{\Omega}(\lambda_1)=\varphi_{\Omega}(\lambda_2)$, then $d=|\lambda_1-\lambda_2|\in\N$ and $\Lambda$ is $d$-periodic, i.e., $\Lambda= \{\lambda_1,\dots,\lambda_d\} +d \Z$.
\end{lemma}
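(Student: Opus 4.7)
The plan is to unpack $\varphi_{\Omega}(\lambda_1)=\varphi_{\Omega}(\lambda_2)$ coordinate by coordinate, use $\sum_j r_j = |\Omega| = 1$ to force $d:=\lambda_1-\lambda_2$ to be an integer, and then observe that the very same algebraic identities make the whole map $\varphi_{\Omega}$ itself $d$-periodic on $\R$. Lemma \ref{local finite} then transfers the periodicity onto $\Lambda$. A separate density input is needed only at the end, to count the number of representatives of $\Lambda$ per period.

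Concretely, equality of the $\varphi_1$-parts says $e^{2\pi i(a_j+r_j)d}=1$, i.e.\ $(a_j+r_j)d\in\Z$ for each $j$, while equality of the $\varphi_2$-parts (whose first entry is trivially $1$ since $a_1=0$) gives $a_j d\in\Z$ for $j=2,\dots,n$. Subtracting pairs, and noting $a_1=0$ for the $j=1$ case, yields $r_j d\in\Z$ for every $j$; summing and using $\sum_j r_j = 1$ gives $d=\left(\sum_j r_j\right)d\in\Z$, so $|\lambda_1-\lambda_2|\in\N$. But these identities read, for arbitrary $x\in\R$, as $\varphi_{\Omega}(x+d)=\varphi_{\Omega}(x)$, so $\varphi_{\Omega}$ is $d$-periodic on $\R$. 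Fixing $\mathcal{B}\subseteq\Lambda$ as in Lemma \ref{local finite}, the membership condition $\varphi_{\Omega}(x)\odot\varphi_{\Omega}(y)=0$ for $y\in\mathcal{B}$ depends only on $\varphi_{\Omega}(x)$, so $\Lambda+d=\Lambda$ follows at once.

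Finally, to realize $\Lambda$ in the form $\{\lambda_1,\dots,\lambda_d\}+d\Z$ with exactly $d$ points per period, I would invoke the standard fact (central to the periodicity proofs in \cite{BM}, \cite{Kol}) that any spectrum of a bounded measurable set of measure $1$ has uniform density equal to $1$. A $d$-periodic spectrum $\Lambda=A+d\Z$ then satisfies $|A|/d=1$, giving $|A|=d$. I expect this last density step to be the only nontrivial part of the argument; everything preceding it is a short algebraic check together with one invocation of Lemma \ref{local finite}.
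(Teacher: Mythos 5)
Your proof is correct and follows the same overall architecture as the paper's: deduce $\varphi_{\Omega}(x+d)=\varphi_{\Omega}(x)$ for all $x\in\R$ from the hypothesis, push periodicity onto $\Lambda$ via Lemma \ref{local finite}, and count representatives per period using the density-one property of the spectrum (Landau). The one place you genuinely diverge is the integrality of $d$: the paper disposes of this with an appeal to the Poisson summation formula, whereas you extract it directly from the coordinates, noting that $\varphi_{\Omega}(d)=(1,\dots,1;1,\dots,1)$ forces $(a_j+r_j)d\in\Z$ and $a_jd\in\Z$, hence $r_jd\in\Z$ for every $j$, and then $d=\bigl(\sum_j r_j\bigr)d\in\Z$ since $|\Omega|=1$. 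Your version is more elementary and self-contained (no Fourier-analytic input, no worry about applying Poisson summation to the discontinuous function $\chi_\Omega$), at the cost of nothing; the paper's phrasing leaves that step to the reader. A small point of rigor in your favor: since Lemma \ref{local finite} is an equivalence, your argument actually gives $\Lambda+d=\Lambda$ exactly, not merely $\Lambda+d\Z\subseteq\Lambda$ as the paper writes, though with the density count the distinction is immaterial.
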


\begin{proof}
Since $\varphi_{\Omega}(\lambda_1)=\varphi_{\Omega}(\lambda_2)$, we
have $\varphi_{\Omega}(d)=(1,\dots,1;1,\dots,1)$ and hence
$\varphi_{\Omega}(x+d)=\varphi_{\Omega}(x), \forall \, x\in\R$. Let
$\mathcal{B}\subseteq\Lambda$ be such that $\varphi_{\Omega}
(\mathcal B)$ is a basis of $V_{\Omega}(\Lambda)$. Then, whenever
$\lambda\in\Lambda$, we have
$\varphi_{\Omega}(\lambda+nd)\odot\varphi_{\Omega}(y)=\varphi_{\Omega}(\lambda)\odot\varphi_{\Omega}(y)=0,\forall
\,n \in \Z \ \mbox{and} \ \forall \, y\in\mathcal{B}$. Thus
$\lambda+d\Z\subseteq \Lambda$ and so $\Lambda$ is $d$-periodic. By
a simple application of Poisson summation formula we see that
$d\in\N$. But $\Lambda$ must have density $1$ (by Landau's density
theorem \cite{Landau}), so we conclude that $\Lambda=
\{\lambda_1,\dots,\lambda_d\} +d \Z$.
\end{proof}




\section {Spectral Implies Tiling for 2 intervals}\label{2int vs}

We will now use the ideas developed in the previous section to give a simple proof of the ``Spectral implies Tiling'' part of Fuglede's conjecture for a set which is a union of two intervals. See \cite{L1} for the original proof.
\medskip

Let $\Omega=[0,r] \cup [a,a+1-r]$, where $0<r<1$, $r<a$, and let $(\Omega,\Lambda)$ be a spectral pair. Without loss of generality, we may assume that $0 \in \Lambda$.
\medskip

Consider the map $\varphi_{\Omega}:\Lambda\rightarrow\C^2\times\C^2$
given by $$\varphi_{\Omega}(\lambda):=(e^{2 \pi i\lambda r}, e^{2
\pi i \lambda (a+1-r)}; 1, e^{2 \pi i\lambda a})$$ and let
$V_\Omega(\Lambda)$ be the subspace spanned by
$\varphi_{\Omega}(\Lambda)=\{\varphi_{\Omega}(\lambda):\lambda\in\Lambda\}$.
Then we have $dim (V_\Omega(\Lambda)) \leq 2$. We will now show that
in fact $dim V_\Omega(\Lambda) = 2$, unless $\Omega$ is degenerate,
i.e., $\Omega$ consists of a single interval of length $1$.
\medskip

First, observe that for $\lambda, \lambda' \in \R$, the vectors
$\varphi_{\Omega}(\lambda)$ and $\varphi_{\Omega}(\lambda')$ are
linearly dependent if and only if $\varphi_{\Omega}(\lambda) =
\varphi_{\Omega}(\lambda')$ (since the third coordinate in
$\varphi_{\Omega}(x)$ is $1 \,  \forall \,\, x \in \R$). Now if
$dim(V_\Omega(\Lambda))=1$, then $\forall \, \lambda \in \Lambda$,
$\varphi_{\Omega}(\lambda)=\varphi_{\Omega}(0)=(1,1; 1,1)$ and so
$\chi_\Omega(n\lambda) =0 \,\forall n \in \Z $ and thus Poisson
summation implies that $\lambda \in \Z$. Further, observe that
$\Lambda$ is actually a subgroup of $\Z$, therefore using Landau's
density criteria we get $\Lambda=\Z$. In particular, $1 \in
\Lambda$, and so $e^{2 \pi i r}=1$, which implies that $r=0$ or $1$,
and thus this is a degenerate case.
\medskip

Now let $\lambda_1=0, \lambda_2, \lambda_3$ be the first three
elements of $\Lambda \cap [0,\infty)$. We claim that
$\varphi_{\Omega}(\lambda_2) \neq \varphi_{_\Omega}(0)$. For if
$\varphi_{\Omega}(\lambda_2) = \varphi_{_\Omega}(0)$ then by Lemma
\ref{repeated}, $\Lambda$ is $\lambda_2-$periodic, and since by our
assumption $\lambda_2$ is the smallest positive element of
$\Lambda$, we get $\Lambda= \lambda_2 \Z $ and
$dim(V_{\Omega}(\Lambda))=1$, a contradiction. A similar argument
shows that $\varphi_{\Omega}(\lambda_2) \neq
\varphi_{_\Omega}(\lambda_3)$. Hence, we have two possible cases to
consider:

\begin{enumerate}
\item $\varphi_{\Omega}(0)=\varphi_{\Omega}(\lambda_3)$,
\item $\varphi_{\Omega}(0),\varphi_{\Omega}(\lambda_2),\varphi_{\Omega}(\lambda_3)$ are all distinct.
\end{enumerate}
\medskip

{\bf Case(1).} By Lemma \ref{repeated}, $\lambda_3=d \in \N $ and $\Lambda = d\Z \cup (\lambda_2 + d \Z)$. But $\Lambda$ must have density $1$, so $d=2$. Next, $\varphi_{\Omega}(2)=\varphi_{\Omega}(0)$ implies that $e^{2\pi i 2 a}= e^{2\pi i 2 r}=  e^{2\pi i 2(a+1-r)} =1 $, and so $a \in \Z/2$
and $r= 1/2$. That such an $\Omega$ tiles $\R$ is now easy to see.

\medskip

{\bf Case(2).} Suppose that $\varphi_{\Omega}(0)$, $\varphi_{\Omega}(\lambda_2)$, $\varphi_{\Omega}(\lambda_3)$ are all distinct. Then any two of these are linearly independent and form a basis of $V_\Omega(\Lambda)$.
\medskip

Let,
\begin{equation}
A=\left(\begin{array}{cccc}
1 & 1  & 1 & 1 \\
1 & e^{2 \pi i \lambda_2 a}  & e^{ 2 \pi i \lambda_2 r} & e^{2 \pi i \lambda_2(a+1-r)} \\
1 & e^{2 \pi i \lambda_3 a}  & e^{ 2 \pi i \lambda_3 r} & e^{2 \pi i \lambda_3(a+1-r)} \\ \end{array}\right)
\end{equation}
\medskip

Then we have $Rank(A)=2$, and in particular

\begin{equation}
\left|\begin{array}{ccc}
1 & 1  & 1  \\
1 & e^{2 \pi i \lambda_2 a}  & e^{ 2 \pi i \lambda_2 r} \\
1 & e^{2 \pi i \lambda_3 a}  & e^{ 2 \pi i \lambda_3 r} \\
\end{array}\right| = 0
\end{equation}
\medskip

Therefore,

\begin{equation}
\left|\begin{array}{cc}
e^{2 \pi i \lambda_2 a}-1 &  e^{2 \pi i \lambda_2 r}-1  \\
e^{2 \pi i \lambda_3 a}-1 &  e^{2 \pi i \lambda_3 r}-1  \\
\end{array}\right| = 0
\end{equation}
\medskip

So finally we get,

\begin{equation}\label{4}
(e^{2 \pi i \lambda_2 a}-1)( e^{2 \pi i \lambda_3 r}-1 )=(e^{2 \pi i \lambda_2 r}-1)(e^{2 \pi i \lambda_3 a}-1).
\end{equation}
\medskip

Put $e^{2 \pi i \lambda_2 a}-1 =\alpha$, and $e^{2 \pi i \lambda_2
r}-1 =\beta$ in equation (\ref{4}).
\medskip

If $\alpha=0$, then $e^{2 \pi i \lambda_2 r}=e^{2 \pi i \lambda_2(a+1-r)}=1$, and so $\varphi_{\Omega}(\lambda_2)=\varphi_{\Omega}(0)$ which is a contradiction. If $\beta=0$, we have  $e^{2 \pi i \lambda_2 a} = e^{2 \pi i \lambda_2(a+1-r)}$ and thus $\varphi_{\Omega}(\lambda_2)$ is of the form $(1, c; 1, c)$. Since the set $\{\varphi_{\Omega}(0),\varphi_{\Omega}(\lambda_2)\}$ generates $V_{\Omega}(\Lambda)$, all elements of $V_{\Omega}(\Lambda)$ are of this form. So that $\Lambda\subseteq \Z$ and is a subgroup of $\Z$.
Thus $\Lambda=\Z$, and $\Omega$ tiles $\R$ by $\Z$.
\medskip

So without loss of generality, let $\alpha, \beta \neq 0$, so that we have $$ \alpha( e^{2 \pi i \lambda_3 r}-1 )=\beta(e^{2 \pi i \lambda_3 a}-1).$$

Consider now two circles given by $C_1(t)=\alpha (e^{2 \pi i t}-1)$,
and $C_2(s)=\beta (e^{2 \pi i s}-1)$, $t,\, s \in [0,1]$. Both
circles pass through $0$. Further note that

\begin{eqnarray}\label{points}
C_1(\lambda_3r) = C_2(\lambda_3a),\\
C_1(\lambda_2r) = C_2(\lambda_2a).\label{points2}
\end{eqnarray}

We consider the various possibilities.
\medskip

First, if the two circles coincide, then they have the same radius and center i.e., $\alpha = \beta$ and so  $e^{2 \pi i \lambda_2 a}= e^{2 \pi i \lambda_2 r}$.
Thus $ \varphi_{\Omega}(\lambda_2)$ is of the form $(1, c; c, 1),$ and we conclude that $\Lambda=\Z$ as before.
\medskip

Next, we consider the case when the two circles $C_1(t),C_2(t)$ are
distinct. As mentioned above, both the circles pass through $0$ and
now there are two more points of intersection given by equations
(\ref{points}) and (\ref{points2}). But two distinct circles can
have at most two distinct points of intersection. If
$C_1(\lambda_2r) = C_2(\lambda_2a) = 0$ then
$\varphi_{\Omega}(0)=\varphi_{\Omega}(\lambda_2)$  and similarly if
$C_1(\lambda_3r) = C_2(\lambda_3a) = 0$, then
$\varphi_{\Omega}(0)=\varphi_{\Omega}(\lambda_3)$. By our assumption
these cases are not possible. Thus the only possibility is that
$C_1(\lambda_2 r) = C_2(\lambda_3 a) = C_1(\lambda_3r) =
C_2(\lambda_2a) = \alpha\beta$. Then $ e^{2 \pi i \lambda_2 a} =
e^{2 \pi i \lambda_3 a}$ and $e^{2 \pi i \lambda_2r} = e^{ 2 \pi i
\lambda_3r}$, i.e.,
$\varphi_{\Omega}(\lambda_2)=\varphi_{\Omega}(\lambda_3)$ which is
again not possible. This completes the proof.




\section {On 3 intervals}\label{3int vs}

In this section, we investigate the ``Spectral implies Tiling'' part of Fuglede's conjecture for three intervals, in the same spirit as in the previous section.
\medskip

Let $\Omega= [0,r]\cup[a,a+s]\cup[b,b+1-r-s]$ where $0<r,s,r+s<1$ and let $(\Omega,\Lambda)$ be a spectral pair. We will assume here that $0 \in \Lambda$.
Again we define the map $\varphi_{_\Omega}:\Lambda\rightarrow \C^3 \times\C^3$ by
$$\varphi_{_\Omega}(\lambda):=(e^{2 \pi i \lambda r}, e^{2 \pi i \lambda (a+s)}, e^{2 \pi i \lambda (b+1-r-s)}; 1, e^{2 \pi i \lambda a}, e^{2 \pi i \lambda b})$$
and let $V_\Omega(\Lambda)$ be the subspace spanned by
$\varphi_{_\Omega}(\Lambda)=\{\varphi_{_\Omega}(\lambda):\lambda\in\Lambda
\}$. We know that $dim(V_{_\Omega}(\Lambda))\leq 3$. As in the
$2$-interval case we will first show that if $\Omega$ is
non-degenerate, in the sense that the three intervals are disjoint
and have non zero length, then $dim(V_{_\Omega}(\Lambda))=3$.

\begin{proposition}
Let $\Omega$, as above, be a spectral set and let $\Lambda$ be a
associated spectrum. Then $dim(V_{_\Omega}(\Lambda))=3$.
\end{proposition}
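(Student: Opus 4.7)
The plan mirrors the two-interval argument of Section~\ref{2int vs} and proceeds by ruling out the cases $\dim V_\Omega(\Lambda)=1$ and $\dim V_\Omega(\Lambda)=2$.

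If $\dim V_\Omega(\Lambda)=1$, then since the fourth coordinate of $\varphi_\Omega(\lambda)$ is always $1$, every $\varphi_\Omega(\lambda)$ must equal $\varphi_\Omega(0)=(1,\ldots,1)$. Evaluating and taking differences of the five non-trivial exponents $\{r,a+s,b+1-r-s,a,b\}$ gives $\lambda r,\lambda s,\lambda a,\lambda b\in\Z$, and summing $\lambda=\lambda(r+s+(1-r-s))\in\Z$, so $\Lambda\subseteq\Z$. Landau's density theorem then forces $\Lambda=\Z$, and $1\in\Lambda$ yields $r,s,a,b\in\Z$, contradicting $0<r,s,r+s<1$.

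Suppose $\dim V_\Omega(\Lambda)=2$. I would first verify $|\varphi_\Omega(\Lambda)|\geq 3$: otherwise Lemma~\ref{repeated} and density~$1$ give $\Lambda=\{0,\lambda_2\}+2\Z$, forcing $\varphi_\Omega(2)=(1,\ldots,1)$, hence $r=s=1/2$ and $r+s=1$, a contradiction. Pick $\lambda_2,\lambda_3\in\Lambda$ so that $\varphi_\Omega(0),\varphi_\Omega(\lambda_2),\varphi_\Omega(\lambda_3)$ are pairwise distinct; the $3\times 6$ matrix with these rows has rank~$2$, so every $3\times 3$ minor vanishes. Writing $x_k=e^{2\pi i\lambda_2\theta_k}$ and $y_k=e^{2\pi i\lambda_3\theta_k}$, the minor built from column~$4$ (the constant~$1$ column) and two other columns $i,j\in\{1,2,3,5,6\}$ reduces to
$$(x_i-1)(y_j-1)=(x_j-1)(y_i-1),$$
structurally identical to equation~(\ref{4}) of Section~\ref{2int vs}. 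The two-circle argument of that section applies to each such pair: either some factor vanishes or the two distinct circles must coincide, giving $x_i=x_j$ and $y_i=y_j$. Aggregating across all pairs yields $I_0:=\{i:x_i=1\}=\{i:y_i=1\}$, and on $I_1:=\{1,2,3,5,6\}\setminus I_0$ the coordinates are constant: $x_i\equiv z$, $y_i\equiv w$, with $z,w\neq 1$ and $z\neq w$.

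Finally, the null condition $\varphi_\Omega(0)\odot\varphi_\Omega(\lambda_2)=0$ expanded on this partitioned form collapses to $|I_0^+|=|I_0^-|+1$, with $\pm$ denoting restriction to the first/second triple of coordinates. Moreover, $v_i=1$ for $i\in I_0$ must hold not only at $\lambda_2$ but for every $\lambda\in\Lambda$ (since the unique expansion coefficient $c(\lambda)$ in the basis $\{\varphi_\Omega(0),\varphi_\Omega(\lambda_2)\}$ is multiplied by $\gamma_i=v_i-1=0$), so $\Lambda\subseteq\bigcap_{i\in I_0}(1/\theta_i)\Z$; density~$1$ then forces $\theta_i\geq 1$ for every $i\in I_0$, and since $\theta_1=r<1$ one has $1\notin I_0$. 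A finite case analysis on the remaining $I_0\subseteq\{2,3,5,6\}$ with $|I_0^+|=|I_0^-|+1$ should rule out each configuration. The technical heart of the proof is precisely this case analysis: configurations such as $I_0=\{3\}$ (giving $\lambda(b+1-r-s)\in\Z$) are not immediately excluded by the density bound when $b+1-r-s\geq 1$, and must be dispatched by combining the rigidity $x_i\equiv z$ on $I_1$ with the arithmetic relations among $\{r,a+s,b+1-r-s,a,b\}$ to derive a rational relation incompatible with the non-degeneracy of~$\Omega$.
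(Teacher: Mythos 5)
Your route is the paper's own: dispose of $\dim V_\Omega(\Lambda)=1$ directly, and in the $\dim=2$ case reduce to three pairwise distinct images, extract the vanishing $3\times 3$ minors $(x_i-1)(y_j-1)=(x_j-1)(y_i-1)$, run the two-circle argument to force all nontrivial coordinates of $\varphi_\Omega(\lambda_2)$ to a common value $z$, and then use the basis $\{\varphi_\Omega(0),\varphi_\Omega(\lambda_2)\}$ to conclude $\Lambda=\Z$ and degeneracy. However, your stated dichotomy ``either some factor vanishes or the two distinct circles must coincide, giving $x_i=x_j$ and $y_i=y_j$'' is not what the two-circle argument actually yields, and this is a real gap. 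With $\alpha=x_i-1$ and $\beta=x_j-1$ both nonzero, the circles $C_1,C_2$ already meet at $0$ and at $\alpha\beta=C_1(\lambda_2\theta_j)=C_2(\lambda_2\theta_i)$, and the minor identity places a third common point $C_1(\lambda_3\theta_j)=C_2(\lambda_3\theta_i)$ on both circles; if the circles are distinct, this point is either $0$ (so $y_i=y_j=1$) or $\alpha\beta$ (so $y_i=x_i$ and $y_j=x_j$). For two intervals the second alternative instantly gives $\varphi_\Omega(\lambda_2)=\varphi_\Omega(\lambda_3)$ and dies; for three intervals it is, for a single pair $(i,j)$, consistent with everything assumed so far, so your ``aggregation across all pairs'' does not follow as written. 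It can be repaired: if $y_i=x_i\neq 1$ and $y_j=x_j\neq 1$ for one pair, then comparing $i$ against every other index $k$ with $x_k\neq 1$ forces $y_k=x_k$ in whichever surviving branch occurs, hence $\varphi_\Omega(\lambda_2)=\varphi_\Omega(\lambda_3)$, a contradiction; only after excluding this alternative may you conclude $x\equiv z$, $y\equiv w$ on $I_1$.

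The second soft spot is the case analysis you defer with ``should rule out each configuration'': that is where the proof closes, and the mechanism is not the density bound you emphasize but the arithmetic one the paper invokes with ``as before, $\Lambda=\Z$.'' In each admissible configuration ($I_0=\{2\}$, $\{3\}$, $\{2,3,5\}$, $\{2,3,6\}$), the conditions $\lambda\theta_i\in\Z$ for $i\in I_0$ together with $e^{2\pi i\lambda\theta_i}=e^{2\pi i\lambda\theta_j}$ for $i,j\in I_1$ (both valid for every $\lambda\in\Lambda$ by the basis representation) combine, using $r+s+(1-r-s)=1$, to give $\lambda\in\Z$; then $\Lambda=\Z$, and taking $\lambda=1$ forces $s\in\Z$ or $r+s\in\Z$, contradicting $0<s,r+s<1$. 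For instance, for $I_0=\{3\}$ one gets $\lambda s\in\Z$, $\lambda(b-r)\in\Z$ and $\lambda(b+1-r-s)\in\Z$, hence $\lambda(1-s)\in\Z$ and $\lambda\in\Z$, and finally $s\in\Z$. These verifications are short, but they are the content of the step; as submitted, your proposal asserts them rather than proves them.
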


\begin{proof}
If $dim(V_{_\Omega}(\Lambda))= 1$, we again get $\Lambda=\Z, \,\, r, s, 1-r-s \in \Z$, i.e., $\Omega$ consists of a single interval of length $1$ and this is a degenerate case.

\medskip
So let, if possible, $dim(V_{_\Omega}(\Lambda))=2$. Suppose $0 <
\lambda_2 < \lambda_3$ are the first three elements of $\Lambda \cap
[0,\infty)$. In the two cases $\varphi_{_\Omega}(0) =
\varphi_{_\Omega}(\lambda_2)$ or $\varphi_{_\Omega}(\lambda_2) =
\varphi_{_\Omega}(\lambda_3)$ we conclude that
$dim(V_{_\Omega}(\Lambda))=1$, and if
$\varphi_{_\Omega}(0)=\varphi_{_\Omega}(\lambda_3)$ we see easily
that $\Lambda=2\Z \cup (2\Z+\alpha)$ and $r,s\in \Z/2$, i.e.,
$\Omega$ is a union of $2$ intervals of length $1/2$ or is a single
interval of length $1$, and this case too is degenerate.

\medskip
Finally, suppose that
$\varphi_{_\Omega}(0),\varphi_{_\Omega}(\lambda_2),\varphi_{_\Omega} (\lambda_3)$ are all distinct. Define
\begin{equation}
A=\left(\begin{array}{cccccc}
1 & 1  & 1 & 1 & 1 & 1 \\
e^{2 \pi i \lambda_2 r} & e^{2 \pi i \lambda_2 (a+s)} & e^{2 \pi i
\lambda_2 (b+1-r-s)} & 1 & e^{2 \pi i \lambda_2 a} & e^{2 \pi i
\lambda_2 b}\\
e^{2 \pi i \lambda_3 r} & e^{2 \pi i \lambda_3 (a+s)} & e^{2 \pi i
 \lambda_3 (b+1-r-s)} & 1 & e^{2 \pi i \lambda_3 a} & e^{2 \pi i
\lambda_3 b}\\
\end{array}\right)
\end{equation}

Then, by our assumption, $Rank(A)=2$, and the rows of $A$ are
distinct. Since $\varphi_{_\Omega}(0)\neq\varphi_{_\Omega}
(\lambda_2)$ and
$\varphi_{_\Omega}(0)\odot\varphi_{_\Omega}(\lambda_2)=0$, at least
two entries in the second row of $A$ are different from $1$, i.e.,
$\exists \,\, i_1, i_2$ such that $A(2,i_1),A(2,i_2)\neq 1$.
Consider the $3 \times 3$ matrix constructed out of the $1$st,
$i_1$th and $i_2$th column of A, since $Rank(A)=2$ it is singular.
Hence we have,
\begin{equation}
\left|\begin{array}{ccc}
1 & 1  & 1  \\
1 & A(2,i_1) & A(2,i_2) \\
1 & A(3,i_1) & A(3,i_2) \\
\end{array}\right| = 0
\end{equation}
\medskip

Using the fact that $Rank(A)=2$, and $A(2,i_1),A(2,i_2)\neq 1$ we
argue as in the two interval case to conclude that the circles
$C_1(t)= (A(2,i_1)-1) (e^{2\pi i t} -1)$ and $C_2(t)= (A(2,i_2)-1)
(e^{2\pi i t} -1)$ coincide. Therefore, $A(2,i_2)= A(2,i_1)=
\alpha$, say. By choosing other columns of $A$, we see that the
coordinates of $\varphi_{\Omega}(\lambda_2)$ are either $1$ or
$\alpha$. But since
$\varphi_{_\Omega}(0)\odot\varphi_{_\Omega}(\lambda_2)=0$,
$\varphi_{_\Omega}(\lambda_2)$ is either of the form  $(1,1,\alpha;
1,1,\alpha)$ or $(1,\alpha,\alpha; 1,\alpha,\alpha)$ (up to suitable
permutations). Now
$\{\varphi_{\Omega}(0),\varphi_{\Omega}(\lambda_2) \} $ forms a
basis of $V_{\Omega}(\Lambda)$, so as before, we see that
$\Lambda=\Z$. But then one of the intervals has length $0$ or $1$,
and this is a degenerate case.
\end{proof}

So $Rank(A)=3$. Let $\lambda_1=0,\lambda_2,\lambda_3,\lambda_4$ be
the first 4 elements of $\Lambda \cap [0,\infty)$. Each of the cases
$\varphi_{\Omega}(\lambda_i)=\varphi_\Omega(\lambda_{i+1})$ or
$\varphi_{\Omega}(\lambda_i)=\varphi_\Omega(\lambda_{i+2})$ will
imply $\Omega$ is degenerate, i.e., one of the intervals has length
$0$. Now if $\varphi_{\Omega}(0)=\varphi_{\Omega}(\lambda_4)$, then
$\lambda_4=d$ and the spectrum is $d$-periodic, and by a density
argument we conclude $d=3$. It follows then, that this is the case
of three equal intervals i.e., $r=s=1/3$ and spectral implies tiling
follows by the result of \cite{Newman} (see \cite{BCKM} for a
proof). So, now it remains to consider the case
that
$\varphi_{\Omega}(0),\varphi_{\Omega}(\lambda_2),\varphi_{\Omega}(\lambda_3),\varphi_{\Omega}(\lambda_4)$
are all distinct and $Rank(A)=3$.

\medskip
To proceed further, we will use the result that the spectrum is
periodic \cite{BM}. Let $d$ be the smallest positive integer such
that $d\Z \subseteq \Lambda$. Let $V_{\Omega}(d\Z)$ denote the
linear space spanned by the image of the arithmetic projection $d\Z$
under the map $\phi_\Omega$. Now if $dim(V_{_\Omega}(d\Z))=3$ or
$2$, then by the results of \cite{BCKM}, Section 5, we get Spectral
implies Tiling.

\medskip
So without loss of generality, we may assume that
$dim(V_{\Omega}(d\Z))=1$, and that $\Lambda$ is $d$-periodic. There
are now two possible cases to consider:

\begin{enumerate}
    \item $dim(V_{\Omega}(\Lambda \setminus d\Z))= 2$
    \item $dim(V_{\Omega}(\Lambda \setminus d\Z)) = 3$.
\end{enumerate}

In the first case we are able to show that $d=3$, thus $\Omega$ is a
union of three equal interval and hence Spectral implies Tiling  as
before. It is the second case that remains inconclusive.
\medskip


{\bf Case(1).} We show that in this case  $d = 3$. Suppose not, and $d > 3$. Let $\Lambda \cap (0,d)=\{\lambda_2,\lambda_3,\dots,\lambda_d\}$. Since $d$ is the minimal period, $\varphi_{\Omega}(\lambda_2),\varphi_{\Omega}(\lambda_3),\varphi_{\Omega}(\lambda_4)$ are all distinct and since $dim(V_{\Omega}(\Lambda \setminus d\Z))=2$, $\{\varphi_{\Omega}(\lambda_2),\varphi_{\Omega}(\lambda_3), \varphi_{\Omega}(\lambda_4)\}$ is a linearly dependent set. Let
\begin{eqnarray*}
\varphi_{\Omega}(\lambda_2) & = &(\xi_1,\xi_2,\xi_3; 1,\xi_5,\xi_6)\\
\varphi_{\Omega}(\lambda_3) & = &(\rho_1,\rho_2,\rho_3; 1,\rho_5,\rho_6)\\
\varphi_{\Omega}(\lambda_4) & = &(\eta_1,\eta_2,\eta_3;
1,\eta_5,\eta_6)
\end{eqnarray*}

Now there exists $i,j$ such that $\xi_i\neq\rho_i$ and
$\xi_j\neq\rho_j$. By our assumption
\begin{equation}\left|\begin{array}{ccc}
1 & \xi_i  & \xi_j  \\
1 & \rho_i & \rho_j \\
1 & \eta_i & \eta_j \\
\end{array}\right| = 0
\end{equation}
and so,

\begin{equation}
\left|\begin{array}{ccc}
1 & \xi_i  & \xi_j  \\
0 & \rho_i-\xi_i & \rho_j-\xi_j \\
0 & \eta_i-\xi_i & \eta_j-\xi_j \\
\end{array}\right| = 0
\end{equation}
Thus we obtain,

\begin{equation}
(\rho_i-\xi_i)(\eta_j-\xi_j)=(\rho_j-\xi_j)(\eta_i-\xi_i)
\end{equation}
which we rewrite as
\begin{equation}
(\rho_i \bar{\xi_i}-1)(\eta_j
\bar{\xi_j}-1)=(\rho_j\bar{\xi_j}-1)(\eta_i\bar{\xi_i}-1)
\end{equation}
\medskip

Since $\xi_i\neq\rho_i$ and $\xi_j\neq\rho_j$ and
$dim(V_{\Omega}(\Lambda \setminus d\Z))=2$ we can exclude the two
possibilities that $\{\eta_i=\rho_i,\eta_j=\rho_j\}$ or that
$\{\eta_i=\xi_i,\eta_j=\xi_j\}$. Then, by the same argument with two
circles as at the end of section 3, we see that $\rho_i
\overline{\xi_i} =\rho_j \overline{\xi_j}= \alpha$. In particular
this would hold for any other index $j'$ such that $\xi_{j'} \neq
\rho_{j'}$. This implies that
$\varphi_{\Omega}(\lambda_3-\lambda_2)=(1,1,\alpha;1,1,\alpha)$ or
$(1,\alpha,\alpha;1,\alpha,\alpha)$ (or some suitable permutation).
But since $\varphi_{_\Omega}(\lambda):=(e^{2 \pi i \lambda r}, e^{2
\pi i \lambda (a+s)}, e^{2 \pi i \lambda (b+1-r-s)}; 1, e^{2 \pi i
\lambda a}, e^{2 \pi i \lambda b})$, we see that
$\lambda_3-\lambda_2 \in \Z$. We write $\lambda_3-\lambda_2 =k$, and
show that $k\Z \subset \Lambda$.
\medskip

Consider the first situation, namely $\varphi_\Omega(k) =
(1,1,\alpha;1,1,\alpha)$. Now
$\varphi_{\Omega}(0),\varphi_{\Omega}(\lambda_2),\varphi_{\Omega}
(\lambda_3)$ is a basis of $V_{\Omega}(\Lambda)$, and we are in the
case where $\varphi_{\Omega} (\lambda_3) = (\xi_1,\xi_2,\alpha\xi_3;
1,\xi_5,\alpha\xi_6)$. But both $\varphi_{\Omega} (\lambda_2)$ and
$\varphi_{\Omega} (\lambda_3)$ are null vectors, so $\xi_3 = \xi_6$.
Then, $\varphi_{\Omega}(kn)\odot
\varphi_{\Omega}(\lambda_i)=0,\,\,i=1,2,3$ for every $n \in \Z$.
Thus by Lemma \ref{local finite}, $k\Z \subseteq \Lambda$, so that
$\Lambda$ and $k < d$. But $d$ is the smallest positive integer with
this property, which is a contradiction. A similar argument works
for all other cases. Thus $d=3$, and $\Omega$ is a union of $3$
equal intervals, and Spectral implies Tiling follows.

\section{Generalized Vandermonde Matrix}

It remains now to consider the case when $dim(V_\Omega(d \Z))=1$ and
$dim(V_\Omega(\Lambda \setminus d\Z))=3$ where $d$ is the smallest
integer such that $d \Z$ is in the spectrum $\Lambda$. Note that
$dim(V_\Omega(d \Z))=1$ implies that $\Omega$ can be written as
$$\Omega=[0,k_1/d] \cup [l_2/d,(l_2+k_2)/d] \cup [l_3/d,(l_3+k_3)/d],$$
where $l_i,k_i \in \N$ and $k_1+k_2+k_3=d$.
\medskip

If $d=3$ then it is the case of three equal intervals in which case
we know that Fuglede's conjecture holds. By known results it is
possible to rule out the cases $d=4$ and 5 as well. Hence the
problem will be resolved if we can show that $d<6$. In any case
finding a bound on $d$ is desirable.
\medskip

Now if $d>3$, let $0=\lambda_1,\lambda_2,\lambda_3 < d $  be three
elements of $\Lambda$ such that
$\{\varphi_{\Omega}(0),\varphi_{\Omega}(\lambda_2),
\varphi_{\Omega}(\lambda_3)\}$ forms a basis of
$V_{\Omega}(\Lambda)$. By our assumption there exists $\lambda_4<d$
in $\Lambda$ such that $\varphi_{\Omega}(\lambda_2),
\varphi_{\Omega}(\lambda_3),\varphi_{\Omega}(\lambda_4)$ are
linearly independent. We construct the matrix
\begin{equation}
A=\left(\begin{array}{cccccc}
1 & 1  & 1 & 1 & 1 & 1 \\
e^{2 \pi i \frac{\lambda_2 k_1}{d}} & e^{2 \pi i \frac{\lambda_2
(l_2+k_2)}{d}} & e^{2 \pi i \frac{\lambda_2 (l_3+k_3)}{d}} & 1 &
e^{2 \pi i \frac{\lambda_2 l_2}{d}} & e^{2 \pi i
\frac{\lambda_2 l_3}{d}}\\
e^{2 \pi i \frac{\lambda_3 k_1}{d}} & e^{2 \pi i \frac{\lambda_3
(l_2+k_2)}{d}} & e^{2 \pi i \frac{\lambda_3 (l_3+k_3)}{d}} & 1 &
e^{2 \pi i \frac{\lambda_3 l_2}{d}} & e^{2 \pi i
\frac{\lambda_3 l_3}{d}}\\
e^{2 \pi i \frac{\lambda_4 k_1}{d}} & e^{2 \pi i \frac{\lambda_4
(l_2+k_2)}{d}} & e^{2 \pi i \frac{\lambda_4 (l_3+k_3)}{d}} & 1 &
e^{2 \pi i \frac{\lambda_4 l_2}{d}} & e^{2 \pi i
\frac{\lambda_4 l_3}{d}}\\
\end{array}\right)
\end{equation}

The rank of this matrix is $3$ i.e., the rows are linearly
dependent, hence each of its $4 \times 4$ minors are zero.
\medskip

Observe that the $4\times 4$ minors are of the form
\begin{equation}
\left|\begin{array}{cccc}
1 & 1 & 1 & 1 \\
X_1^{i} & X_1^{j} & X_1^{k} & X_1^{l}\\
X_2^{i} & X_2^{j} & X_2^{k} & X_2^{l}\\
X_3^{i} & X_3^{j} & X_3^{k} & X_3^{l}\\
\end{array}\right|
\end{equation}

Thus we get after reductions, equations of the form

\begin{equation}\label{15}
\left|\begin{array}{cccc}
1 & 1 & 1 & 1 \\
1 & X_1^{j} & X_1^{k} & X_1^{l}\\
1 & X_2^{j} & X_2^{k} & X_2^{l}\\
1 & X_3^{j} & X_3^{k} & X_3^{l}\\
\end{array}\right|=0
\end{equation}

These are the determinants of generalized Vandermonde matrices in
the variables $(X_1,X_2,X_3)$ and exponents $(j,k,l)$. We write
(\ref{15}) as $$R_{(j,k,l)}(X_1,X_2,X_3)=0.$$

We are interested in the common zero solution set  of these
Vandermonde varieties intersected with the set $ 1 \times \mathbb
T^3$.
\medskip

In particular, let us consider the generalized Vandermonde matrix
which we get by taking those minors where the first three columns
correspond to the left end-points of the set $\Omega$ and the $4$th
column is one of the right end point i.e., a minor obtained by
choosing the $4$th, $5$th, and $6$th columns of the matrix $A$ and
one of the first three columns. Thus we consider $R_{(i_5,i_6,i_1)}(X_1,X_2,X_3)$,
$R_{(i_5,i_6,i_2)}(X_1,X_2,X_3),$ and $R_{(i_5,i_6,i_3)}(X_1,X_2,X_3).$
\medskip

In \cite{DZ} (Theorem 3.1) it is proved that the polynomials
$$ T_{(j,k,l)}(X_1,X_2,X_3) =\frac{R_{(j,k,l)}(X_1,X_2,X_3)}{V(X_1^g,X_2^g,X_3^g)},$$
 are either irreducible or constant. Here $g = gcd(j,k,l)$ and $V$ denotes the standard Vandermonde determinant, thus
$V(X_1^g,X_2^g,X_3^g)= R_{(1,2,3)}(X_1^g,X_2^g,X_3^g)$.
\medskip

Consider next, the Schur Polynomials given by
$$S_{(j,k.l)}(X_1,X_2,X_3)=\frac{R_{(j,k,l)}(X_1,X_2,X_3)}{V(X_1,X_2,X_3)},$$

Let $g_1=gcd(i_5,i_6,i_1)$, $g_2=gcd(i_5,i_6,i_2)$ and
$g_3=gcd(i_5,i_6,i_3)$. We know that $gcd(g_1,g_2,g_3) =1$ by our
choice of $d$. Theorem 4.1 in \cite{DZ} regarding intersection of Fermat hypersurfaces seems to suggest
that there can not be many solutions.
\medskip

In the particular case when $gcd(g_1,g_2)=1$  the analysis in
\cite{CL} tells us that $S_{(i_5,i_6,i_1)}$ and $S_{(i_5,i_6,i_2)}$
are coprime and each hypersurface defined by $S_{(i_5,i_6,i_1)}=0$
and $S_{(i_5,i_6,i_2)}=0$ in $\mathbb C^3$ has distinct reduced
irreducible components of dimension 2. Then their intersection $W$
has dimension 1. (Note that with respect to the setting of
\cite{CL}, we have fixed the first coordinate, hence we get one
dimension less). 
\medskip

In our case we need  only those solutions such that $|X_j| = 1, \,
\forall j $. In other words we need the set $W\cap \mathbb T^3$.
This condition in itself is very restrictive. In the previous
sections, where we used the two-circles argument along with mutual
orthogonality, we saw that this set can be finite. If an analysis as
in \cite{CL} can be carried through to get that $W\cap \mathbb T^3$
is indeed finite, we immediately get a bound on the period $d$. Then
along with orthogonality, one may be able to resolve the remaining
case of the $3$-intervals!

\end{document}